\documentclass[11pt,a4paper,twoside,english]{article}

\usepackage{amssymb,amsmath,amsthm}
\usepackage{graphicx}
\usepackage{babel}
\usepackage{color}
\usepackage[round]{natbib}

\textwidth 15cm
\textheight 23cm
\topmargin -1.5cm
\oddsidemargin .7cm
\evensidemargin .7cm

\parskip = 0.25\baselineskip

\def\bs{\boldsymbol}

\def\argmax{\mathop{\rm arg\,max}}

\def\DTV{\mathop{d_{\rm TV}}}

\def\btp{b_{t\bs{p}}}

\def\Qtp{Q_{t\bs{p}}}

\def\rtp{r_{t\bs{p}}}

\def\Wtp{W_{t\bs{p}}}

\def\JJ{\mathcal{J}}

\def\Poiss{\mathop{\mathrm{Poiss}}\nolimits}

\def\Ex{\mathop{\mathrm{I\!E}}\nolimits}
\def\Pr{\mathop{\mathrm{I\!P}}\nolimits}

\def\Var{\mathop{\mathrm{Var}}\nolimits}

\newcommand{\R}{\mathbb{R}}

\leftmargini=1.0\parindent

\newtheoremstyle{localthm}
	{5pt} 
	{5pt} 
	{\sl} 
	{} 
	{\bf} 
	{{\rm.}} 
	{.7em} 
	{} 

\theoremstyle{localthm}
\newtheorem{Theorem}{Theorem}
\newtheorem{Proposition}{Proposition}

\newtheoremstyle{localrem}
	{5pt} 
	{5pt} 
	{\rm} 
	{} 
	{\bf} 
	{{\rm.}} 
	{.7em} 
	{} 

\theoremstyle{localrem}

\newtheorem{Remark}[Theorem]{Remark}
\begin{document}

\addtolength{\baselineskip}{0.35\baselineskip}

\title{\bf The Density Ratio of Poisson Binomial versus Poisson Distributions}
\author{Lutz D\"{u}mbgen (University of Bern){\footnote{Research supported by Swiss National Science Foundation}} \\
	\ and\\
	Jon A.\ Wellner (University of Washington, Seattle){\footnote{Research supported in part by:
	(a) NSF Grant DMS-1566514; and 
	(b) NI-AID Grant 2R01 AI291968-04}}
}
\date{\today}
\maketitle

\begin{abstract}
Let $b(x)$ be the probability that a sum of independent Bernoulli random variables with parameters $p_1, p_2, p_3, \ldots \in [0,1)$ equals $x$, where $\lambda := p_1 + p_2 + p_3 + \cdots$ is finite. We prove two inequalities for the maximum of the density ratio $b(x)/\pi_\lambda(x)$, where $\pi_\lambda$ is the probability mass function of the Poisson distribution with parameter $\lambda$.
\end{abstract}

\paragraph{Key words:} Poisson approximation, relative errors, total variation distance.

\section{Introduction and main results}

We consider independent Bernoulli random variables 
$Z_1, Z_2, Z_3, \ldots \in \{0,1\}$ with parameters 
$\Pr(Z_i = 1) = \Ex(Z_i) = p_i \in [0,1)$ and their sum 
$X = \sum_{i\ge 1} Z_i$. By the first and second Borel--Cantelli lemmas, 
$X$ is almost surely finite if and only if the sequence $\bs{p} = (p_i)_{i \ge 1}$ satisfies
\begin{equation}
\label{eq}
	\lambda := \sum_{k=1}^\infty p_k \ < \ \infty ,
\end{equation}
and we exclude the trivial case $\lambda = 0$. Under this assumption, the distribution $Q = Q_{\bs{p}}$ of $X$ is given by
\begin{equation}
\label{eq:b(x)}
	b(x) = b_{\bs{p}}(x) := \Pr(X = x)
	\ = \ \sum_{J \in \JJ(x)} \, \prod_{i \in J} p_i \prod_{k \in J^c} (1 - p_k)
\end{equation}
for integers $x \ge 0$, where $\JJ(x) := \{J \subset \mathbb{N} : \#J = x\}$ and $J^c := \mathbb{N} \setminus J$.

It is well-known that the distribution $Q$ may be approximated by the Poisson distribution 
$\Poiss_\lambda$ with probability mass function $\pi = \pi_\lambda$ given by 
$\pi(x) = e^{-\lambda} \lambda^x / x!$, provided that the quantity
\[
	\Delta \ := \ \lambda^{-1} \sum_{i\ge 1} p_i^2
\]
is small. Indeed, \cite{Barbour_Hall_1984} obtained the remarkable bound
\[
	\DTV(Q, \Poiss_\lambda) \ \le \ (1 - e^{-\lambda}) \Delta
\]
via a suitable version of Stein's method developed by \cite{Chen_1975}. Here $\DTV(\cdot,\cdot)$ stands for total variation distance. Note also that $\Var(X) = \sum_{i \ge 1} p_i(1 - p_i) = \lambda (1 - \Delta)$, and
\[
	\Delta \ \le \ p_* \ := \ \max_{i\ge 1} p_i .
\]

\paragraph{Main results.}
Motivated by \cite{Duembgen_Samworth_Wellner_2020}, we are aiming at upper bounds for the maximal density ratio
\[
	\rho(Q,\Poiss_\lambda) \ := \ \sup_{x \ge 0} \, r(x)
\]
with $r(x) = r_{\bs{p}}(x) := b(x)/\pi(x)$. Note that the probability mass functions $b$ and $\pi$ are densities (in the sense of the Radon-Nikodym theorem) of $Q$ and $\Poiss_\lambda$ with respect to counting measure on the set $\mathbb{N}_0$ of nonnegative integers. Thus $r = b/\pi_\lambda$ is the ``density ratio'' in the title. For arbitrary sets $A \subset \mathbb{N}_0$, the probability $Q(A) = \Pr(X \in A)$ is never larger than the corresponding Poisson probability times $\rho \bigl( Q,\Poiss_\lambda \bigr)$, no matter how small the Poisson probability is. Hence, $\rho(Q,\Poiss_\lambda)$ is a strong measure of error when $Q$ is approximated by $\Poiss_\lambda$, see also Remark~\ref{rem:TV} below. While \cite{Duembgen_Samworth_Wellner_2020} obtained explicit and essentially sharp bounds for $\rho(Q,P)$ for various pairs of distributions $P$ and $Q$, the present setting with the particular Poisson binomial distribution $Q$ and $P = \mathrm{Poiss}_\lambda$ seems to be substantially more difficult. In this note we prove the following result:

\begin{Theorem}
\label{thm:BinPoiss.pmax}
For any sequence $\bs{p}$ of probabilities $p_i \in [0,1)$ with $\lambda = \sum_{i \ge 1} p_i < \infty$,
\[
	\rho(Q, \Poiss_\lambda)
	\ \le \ (1 - p_*)^{-1} .
\]
\end{Theorem}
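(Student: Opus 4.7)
The plan is to start from the explicit factorization
\[
	b(x) \;=\; \prod_{i\ge 1}(1-p_i)\cdot e_x(\bs{q}), \qquad q_i := \frac{p_i}{1-p_i},
\]
where $e_x$ denotes the $x$th elementary symmetric polynomial. Dividing by $\pi_\lambda(x)$ yields
\[
	r(x) \;=\; C\cdot\frac{x!\,e_x(\bs{q})}{\lambda^x}, \qquad C := \prod_{i\ge 1}(1-p_i)e^{p_i},
\]
and the elementary inequality $(1-p)e^p \le 1$ on $[0,1)$ gives $C\le 1$. The boundary cases then fall out directly: $r(0) = C \le 1 \le (1-p_*)^{-1}$, and $r(1) = CS/\lambda$ with $S := \sum_i q_i$; the termwise bound $p_i/(1-p_i) \le p_i/(1-p_*)$ gives $S \le \lambda/(1-p_*)$, whence $r(1) \le (1-p_*)^{-1}$.

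For $x\ge 2$ the combination $x!\,e_x(\bs{q}) \le S^x$ (Maclaurin) with $C\le 1$ and $S\le\lambda/(1-p_*)$ produces only the too-weak estimate $r(x) \le (1-p_*)^{-x}$, since $C$ and $S$ cannot both be extremal: small $p_i$'s drive $C\to 1$ but force $S\to\lambda$, whereas $p_i \equiv p_*$ drives $S$ toward $\lambda/(1-p_*)$ while making $C$ exponentially small in the number of Bernoullis. To exploit this tension I would single out an index $j_*$ with $p_{j_*} = p_*$ and use the recursion $b(x) = (1-p_*)b'(x) + p_*\,b'(x-1)$, where $b'$ is the Poisson binomial pmf of $(p_i)_{i\ne j_*}$ with total $\lambda' = \lambda - p_*$. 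Passing to density ratios and letting $u := p_*/\lambda$ gives
\[
	r(x) \;=\; e^{p_*}(1-u)^{x-1}\bigl[(1-p_*)(1-u)\,r'(x) + ux\,r'(x-1)\bigr],
\]
and one then proceeds by induction on the number of nonzero parameters, the base case (single Bernoulli) being immediate from $(1-p_*)e^{p_*}\le 1$.

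The main obstacle will be closing the induction cleanly. A naive hypothesis $r'(y) \le (1-p'_*)^{-1}$ together with the crude bound $p'_* \le p_*$ reduces the task to the calculus inequality
\[
	e^{p_*}(1-u)^{x-1}\bigl[\,1 - p_* + u(x - 1 + p_*)\,\bigr] \;\le\; 1,
\]
but inspection at the critical point $u^* = p_*/(x-1+p_*)$ (obtained by setting the $u$-derivative to zero) shows the maximum equals $e^{p_*}/(1+p_*/(x-1))^{x-1}$, which exceeds $1$ for every finite $x$ and only tends to $1$ in the Poissonian limit $x\to\infty$. The induction must therefore carry additional information, either by propagating the joint bound $(1-p_*)CS \le \lambda$ through the recursion (which extracts the cancellation between the $C$ and $e_x(\bs{q})$ factors) or by using the sharper ingredients $(1-v)e^v \le 1+v$ on $v\in[0,1)$ together with Bernoulli's inequality $(1+v/m)^m \ge 1+v$ for $m\ge 1$, to reduce the verification to a one-variable calculus estimate that does close. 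This last step — tracking the right auxiliary quantity so that the inductive step is actually valid, rather than off by a factor that blows up with $x$ — is the heart of the argument.
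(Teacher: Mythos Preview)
Your proposal is not a proof: it sets up the problem correctly, diagnoses accurately why the naive induction fails, and then stops exactly where the real work begins. The computation showing that
\[
	e^{p_*}(1-u)^{x-1}\bigl[\,1 - p_* + u(x-1+p_*)\,\bigr] \ \le \ 1
\]
fails at $u^* = p_*/(x-1+p_*)$, with maximal value $e^{p_*}(1+p_*/(x-1))^{-(x-1)} > 1$, is correct, and the configuration $\lambda = x-1+p_*$ is admissible (it even allows $x$ to be the maximizer of $r$), so no hidden constraint rescues the step. Neither of your proposed repairs is actually carried out: ``propagating $(1-p_*)CS \le \lambda$'' is not specified well enough to check, and the elementary inequalities you list do not close the gap you yourself exhibited. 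As written, the induction-on-parameters approach is open, and you have identified but not supplied what you call ``the heart of the argument.''

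The paper bypasses induction altogether. Instead of peeling off one Bernoulli, it scales the whole vector along the ray $t\bs{p}$, $t\in(0,1]$, and studies $f(t) := \log\rho(Q_{t\bs{p}},\Poiss_{t\lambda}) = \max_x L_x(t)$ with $L_x(t) = \log r_{t\bs{p}}(x)$. Since $f(0+)=0$, it suffices to bound the right derivative by $p_*/(1-tp_*)$, i.e.\ to bound $L_x'(t)$ at any maximizing $x$. The identity $L_x'(t) = \lambda - t^{-1}(x+1)b_{t\bs{p}}(x+1)/b_{t\bs{p}}(x)$ together with the first-order condition $x\,b_{t\bs{p}}(x)/b_{t\bs{p}}(x-1)\ge t\lambda$ reduces the task (after rescaling $t\bs{p}\to\bs{p}$) to the implication
\[
	\frac{x\,b(x)}{b(x-1)} \ \ge \ \lambda
	\quad\Longrightarrow\quad
	\frac{(x+1)b(x+1)}{b(x)} \ \ge \ \lambda - \frac{p_*}{1-p_*},
\]
which is proved by two applications of Jensen's inequality to the weighted-average representations \eqref{eq:B.ratio.1} and \eqref{eq:B.ratio.2a}: the premise forces $\sum_J \bar W(J)\bigl(\bar S(J)+S(J^c)\bigr)\ge\lambda$, and then one subtracts $\sum_J \bar W(J)\bar S(J)\le p_*/(1-p_*)$. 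The ray parametrization converts the discrete comparison among the values $r(x)$ into a differential inequality for which these combinatorial identities are tailor-made; your recursion, by contrast, does not align with any such structure and leaves you with the losing calculus estimate you found.
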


We conjecture that Theorem~\ref{thm:BinPoiss.pmax} is true with $\Delta$ in place of $p_*$. In the case of $\lambda \le 1$ we can prove the following result:

\begin{Theorem}
\label{thm:BinPoiss.Delta}
For any sequence $\bs{p}$ of probabilities $p_i \in [0,1)$ with $\lambda = \sum_{i \ge 1} p_i \le 1$,
\[
	\Delta \Bigl( 1 - \frac{\Delta}{2} - \frac{\lambda}{2(1 - p_*)} \Bigr)
	\ \le \ \log \rho(Q, \Poiss_\lambda)
	\ \le \ \Delta .
\]
\end{Theorem}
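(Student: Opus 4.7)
The strategy is to first locate the maximizer of $r(x) := b(x)/\pi(x)$ and then to bound $\log r(1)$ sharply from above and below. From the Chen--Stein identity $x\, b(x) = \sum_i p_i\, b_{-i}(x-1)$, with $b_{-i}$ denoting the Poisson-binomial pmf obtained by dropping the $i$th Bernoulli, combined with $b(x-1) = (1-p_i)\, b_{-i}(x-1) + p_i\, b_{-i}(x-2)$, one derives
\[
r(x) - r(x-1) \;=\; \frac{1}{\lambda\,\pi(x-1)} \sum_i p_i^2 \bigl(b_{-i}(x-1) - b_{-i}(x-2)\bigr).
\]
At $x = 1$ the right-hand side is a strictly positive combination of $b_{-i}(0) > 0$, so $r(1) > r(0)$. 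For $x \ge 2$ I would show that the hypothesis $\lambda \le 1$ together with log-concavity of the Poisson-binomial pmfs (applied to the weighted sum $\sum_i p_i^2 b_{-i}(y)$) forces the right-hand side to be non-positive; equivalently $b(x)/b(x-1) \le \lambda/x$. Hence $\rho(Q,\Poiss_\lambda) = r(1)$, and the two-sided inequality reduces to bounding $\log r(1)$.

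\textbf{Lower bound.} Write $w_i := p_i/\lambda$ and $A_i := \prod_{k \ne i}(1-p_k)$, so that $r(1) = e^{\lambda}\sum_i w_i A_i$. Concavity of $\log$ and $\sum_i w_i = 1$ give
\[
\log r(1) \;\ge\; \lambda + \sum_i w_i \log A_i \;=\; \lambda + \sum_k\Bigl(1 - \frac{p_k}{\lambda}\Bigr)\log(1-p_k).
\]
With $g(p) := -p - \log(1-p) = \sum_{j \ge 2} p^j/j$, so that $\log r(0) = \lambda + \sum_k\log(1-p_k) = -\sum_k g(p_k)$, a short rearrangement yields
\[
\log r(1) \;\ge\; \Delta \;-\; \frac{1}{\lambda}\sum_k g(p_k)\,(\lambda - p_k).
\]
Using $g(p_k) \le p_k^2/(2(1-p_k)) \le p_k^2/(2(1-p_*))$ and $(\lambda - p_k)/\lambda \le 1$ bounds the correction by $\sum_k p_k^2/(2(1-p_*)) = \lambda\Delta/(2(1-p_*))$, whence $\log r(1) \ge \Delta\bigl(1 - \lambda/(2(1-p_*))\bigr)$. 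The claimed lower bound $\Delta(1 - \Delta/2 - \lambda/(2(1-p_*)))$ follows since $-\Delta^2/2 \le 0$.

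\textbf{Upper bound (main obstacle).} Setting $T := \sum_i p_i^2/(1-p_i)$, one has $\log r(1) = \log r(0) + \log(1 + T/\lambda)$, so $\log r(1) \le \Delta$ is equivalent to $\log(1+T/\lambda) \le \Delta + \sum_i g(p_i)$. This is tight exactly when all but one $p_i$ vanish, where both sides equal $-\log(1-p_1)$. The difficulty is that for $p_*$ close to $1$ the two summands $\log r(0)$ and $\log(1+T/\lambda)$ are individually large (negative and positive respectively) and must cancel to leading order, so bounds of the form $\log(1+x) \le x$ or $\log(1+x) \le x - x^2/(2(1+x))$ lose an unacceptable factor $1/(1-p_*)$. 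My proposed route is to show that, for fixed $\lambda \le 1$, the functional $F(\bs{p}) := \log r(1) - \Delta$ on the simplex $\{\sum_i p_i = \lambda,\; p_i \in [0,1)\}$ is Schur-convex and hence attains its maximum value $0$ at the vertices $(\lambda,0,0,\ldots)$. In the base case $n=2$ one computes directly that
\[
F(p_1, p_2) \;=\; u + \log(1-u), \qquad u := 2 p_1 p_2/\lambda \in [0, \lambda/2],
\]
which is non-positive by $\log(1-u) \le -u$, and the Schur--Ostrowski condition $(p_1 - p_2)(\partial_1 F - \partial_2 F) \ge 0$ is verified by a direct calculation; the general case should then follow from extending the Schur--Ostrowski analysis of $\partial_i F - \partial_j F$ to $n \ge 3$.
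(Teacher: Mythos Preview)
Your lower-bound argument is correct and in fact yields $\log r(1) \ge \Delta\bigl(1 - \lambda/(2(1-p_*))\bigr)$, which is \emph{sharper} than the bound stated in the theorem (the extra $-\Delta^2/2$ is not needed). The paper obtains its lower bound by a direct term-by-term estimate of $L_1(1) = \sum_i(p_i + \log(1-p_i)) + \log\bigl(\lambda^{-1}\sum_i q_i\bigr)$, bounding the first sum below by $-\lambda\Delta/(2(1-p_*))$ and the second by $\log(1+\Delta) \ge \Delta - \Delta^2/2$. Your Jensen step on the convex combination $\sum_i w_i A_i$ is a genuinely different and slightly better route.

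The upper bound, however, has a real gap. You reduce $\log r(1) \le \Delta$ to Schur-convexity of $F(\bs{p}) = \log r(1) - \Delta$ on the simplex $\{\sum_i p_i = \lambda\}$, verify it for $n=2$, and then assert that ``the general case should then follow'' from the Schur--Ostrowski criterion. But that criterion requires checking $(p_i - p_j)(\partial_i F - \partial_j F) \ge 0$ for all pairs in \emph{every} dimension, and the $n=2$ calculation (where $F$ reduces to a function of the single variable $u = 2p_1p_2/\lambda$) gives no purchase on this; the partials of $\log r(1)$ involve the full sum $\sum_k q_k$ and do not decouple into pairs. So as written the upper bound is not proved.

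The paper's argument for the upper bound is entirely different and avoids any majorization analysis. It introduces the ray $t \mapsto t\bs{p}$ for $t \in [0,1]$ and sets
\[
	L_1(t) \ := \ \log r_{t\bs{p}}(1)
	\ = \ t\lambda + \sum_{k}\log(1 - tp_k) + \log\Bigl(\lambda^{-1}\sum_i \frac{p_i}{1 - tp_i}\Bigr),
\]
a real-analytic function on $[0,1]$. One checks directly that $L_1(0) = 0$ and $L_1'(0) = \Delta$. The key step is that $L_1'' \le 0$ on $[0,1]$: this follows from ultra-log-concavity of the Poisson-binomial pmf, since $L_1'(t) = \lambda\bigl(1 - r_{t\bs{p}}(2)/r_{t\bs{p}}(1)\bigr)$ and the score $r_{t\bs{p}}(2)/r_{t\bs{p}}(1)$ is increasing in $t$. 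Taylor's theorem then gives $L_1(1) = L_1(0) + L_1'(0) + \tfrac12 L_1''(\xi) \le \Delta$ for some $\xi \in (0,1)$. This is short and complete; the Schur-convexity route, even if it can be made to work, is considerably heavier.

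Your sketch for locating the maximizer at $x=1$ via the Chen--Stein identity is also incomplete: the claim that $\sum_i p_i^2\bigl(b_{-i}(x-1) - b_{-i}(x-2)\bigr) \le 0$ for $x \ge 2$ does not follow from log-concavity alone (individual $b_{-i}$ need not be monotone on $\{0,1\}$ when $\lambda - p_i$ is close to $1$), so a further argument is needed. The paper handles this separately (its Proposition~1) by applying Jensen's inequality to an explicit representation of $b(x)/((x+1)b(x+1))$.
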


In particular, $\lambda \le 1$ implies that $\rho(Q, \Poiss_\lambda) \le e^\Delta < 1/(1 - \Delta)$. And since $\Delta \le p_* \le \lambda$, Theorem~\ref{thm:BinPoiss.Delta} implies that
\[
	\frac{\log \rho(Q,\Poiss_\lambda)}{\Delta} \ \to \ 1
	\quad\text{as} \ \lambda \to 0 .
\]

\begin{Remark}[Total variation distance]
\label{rem:TV}
Proposition~1~(a) of \cite{Duembgen_Samworth_Wellner_2020} implies that $\DTV(Q,\Poiss_\lambda) \le Q(\{b > \pi\}) \bigl( 1 - \rho(Q,\Poiss_\lambda)^{-1} \bigr)$. Since $b(0) = \prod_{i \ge 1} (1 - p_i)$ satisfies the two inequalities $1 - \lambda \le b(0) < e^{-\lambda} = \pi(0)$, we obtain the inequality $Q(\{b > \pi\}) \le 1 - b(0) \le \min(1,\lambda)$ and the bounds
\begin{align*}
	\DTV(Q,\Poiss_\lambda) \
	&\le \ \min(1,\lambda)
		\bigl( 1 - \rho(Q,\Poiss_\lambda)^{-1} \bigr) \\
	&\le \ \begin{cases}
		\min(1,\lambda) p_* \\
		\displaystyle
		\lambda (1 - e^{-\Delta}) \ \le \ \lambda\Delta
			= \sum\nolimits_{i\ge 1} p_i^2
			& \text{if} \ \lambda \le 1 .
		\end{cases}
\end{align*}
\end{Remark}

The remainder of this note is structured as follows: In Section~\ref{sec:Preparations} we provide some basic formulae for the probability masses $b(x)$ and the ratios $r(x)$. Then we present the proofs of Theorems~\ref{thm:BinPoiss.pmax} and \ref{thm:BinPoiss.Delta} in Section~\ref{sec:Proofs}.

\section{Auxiliary results}
\label{sec:Preparations}

\subsection{The probability mass function of $Q$}

Since $b(0) < 1$ (see Remark~\ref{rem:TV}), we know that $\rho(Q,\Poiss_\lambda) = \sup_{x \ge 1} \, r(x)$. Writing
\[
	\prod_{i \in J} p_i \prod_{k \in J^c} (1 - p_k)
	\ = \ \prod_{i \in J} \frac{p_i}{1 - p_i} \prod_{k \ge 1} (1 - p_k)
	\ = \ b(0) \ \prod_{i \in J} \frac{p_i}{1 - p_i} ,
\]
equation~\eqref{eq:b(x)} may be reformulated as
\[
	b(x) \ = \ b(0) \sum_{J \in \JJ(x)} W(J)
\]
with
\[
	W(J) \ := \ \prod_{i\in J} q_i
	\quad\text{and}\quad
	q_i \ := \ \frac{p_i}{1 - p_i} \ \in \ [0,\infty) ,
\]
i.e.\ $p_i = q_i/(1 + q_i)$. Note also that the support of $Q$ is equal to an integer interval containing $0$. Precisely,
\[
	b(x) \ > \ 0
	\quad\text{if and only if}\quad
	x \ \le \ \#\{i \ge 1 : p_i > 0\} \in \mathbb{N} \cup \{\infty\} .
\]

\subsection{Discrete scores}

For any $x \ge 0$,
\[
	\frac{\pi(x+1)}{\pi(x)} \ = \ \frac{\lambda}{x+1} ,
\]
so the ``scores'' $r(x+1)/r(x)$ are given by
\[
	\frac{r(x+1)}{r(x)} \ = \ \frac{(x+1) b(x+1)}{\lambda b(x)}
\]
for $x \ge 0$ with $b(x) > 0$. If $x_o$ is a maximizer of $r(\cdot)$, then
\begin{equation}
\label{ineq:argmax}
	\frac{(x_o+1) b(x_o+1)}{b(x_o)} \ \le \ \lambda \ \le \ \frac{x_o b(x_o)}{b(x_o-1)}
\end{equation}
with $b(-1) := 0$.

There are various ways to represent the ratios $b(x+1)/b(x)$. The following notation will be useful for that task: For any set $J \subset \mathbb{N}$, we define
\[
	s(J) \ := \ \sum_{i \in J} p_i
	\quad\text{and}\quad
	S(J) \ := \ \sum_{i \in J} q_i .
\]
In case of $x := \# J < \infty$ we set
\[
	\bar{s}(J) \ := \ s(J) / x ,
	\quad
	\bar{S}(J) \ := \ S(J) / x
	\quad\text{and}\quad
	\bar{W}(J) \ := \ W(J) \Big/ \sum_{L \in \JJ(x)} W(L)
\]
with the convention $0/0 := 0$. The numbers $\bar{W}(J)$ are probability weights in the sense that $\sum_{J \in \JJ(x)} \bar{W}(J) = 1$ whenever $b(x) > 0$. In that case,
\begin{align*}
	\frac{b(x+1)}{b(0)} = \sum_{L \in \JJ(x+1)} W(L) \
	&= \ \sum_{L \in \JJ(x+1)} \frac{1}{x+1} \sum_{k \in L} W(L \setminus \{k\}) q_k \\
	&= \ \frac{1}{x+1} \sum_{J \in \JJ(x)} W(J) \sum_{k \in J^c} q_k \\
	&= \ \frac{1}{x+1} \sum_{J \in \JJ(x)} W(J) S(J^c) .
\end{align*}
Consequently,
\begin{equation}
\label{eq:B.ratio.1}
	\frac{(x+1)b(x+1)}{b(x)}
	\ = \ \sum_{J \in \JJ(x)} \bar{W}(J) S(J^c) .
\end{equation}
Alternatively, if $b(x+1) > 0$, then
\begin{align*}
	\frac{b(x)}{b(0)} = \sum_{J \in \JJ(x)} W(J) \
	&= \ \sum_{J \in \JJ(x)} W(J)
		\sum_{k \in J^c} \frac{q_k}{S(J^c)} \\
	&= \ \sum_{J \in \JJ(x)} \sum_{k \in J^c}
		\frac{W(J \cup \{k\})}{q_k + S((J \cup \{k\})^c)} \\
	&= \ \sum_{L \in \JJ(x+1)} W(L) \sum_{k \in L} \frac{1}{q_k + S(L^c)} .
\end{align*}
Consequently,
\begin{equation}
\label{eq:B.ratio.2a}
	\frac{b(x)}{(x+1) b(x+1)}
	\ = \ \sum_{L \in \JJ(x+1)} \bar{W}(L)
		\frac{1}{x+1} \sum_{k \in L} \frac{1}{q_k + S(L^c)} .
\end{equation}
One can repeat the previous arguments with the sums $\sum_{k \in J^c} p_j/s(J^c) = 1$ in place of $\sum_{k \in J^c} q_k/S(J^c) = 1$. This leads to
\[
	\frac{b(x)}{b(0)}
	\ = \ \sum_{J \in \JJ(x)} \sum_{k \in J^c}
		\frac{W(J) p_k}{p_k + s((J \cup \{k\})^c)}
	\ = \ \sum_{L \in \JJ(x+1)} W(L) \sum_{k \in L} \frac{1 - p_k}{p_k + s(L^c)} ,
\]
because $W(J) p_k = W(J \cup \{k\}) (1 - p_k)$ for $k \in J^c$. Consequently,
\begin{equation}
\label{eq:B.ratio.2b}
	\frac{b(x)}{(x+1)b(x+1)}
	\ = \ \sum_{L \in \JJ(x+1)} \bar{W}(L)
		\frac{1}{x+1} \sum_{k \in L} \frac{1 - p_k}{p_k + s(L^c)} .
\end{equation}

Analyzing equation~\eqref{eq:B.ratio.2b} leads to a first result about the location of maximizers of $r(\cdot)$:

\begin{Proposition}
\label{prop:argmax}
Any maximizer $x_o \in \mathbb{N}_0$ of $r(\cdot)$ satisfies the inequalities $1 \le x_o \le \lceil\lambda\rceil$.
\end{Proposition}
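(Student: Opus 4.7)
The plan is to control the ratio $r(x+1)/r(x) = (x+1)b(x+1)/(\lambda b(x))$ via \eqref{ineq:argmax} and the representation \eqref{eq:B.ratio.2b}.

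The lower bound $x_o \ge 1$ is immediate from Remark~\ref{rem:TV}: since $b(0) < e^{-\lambda} = \pi(0)$, we have $r(0) < 1$, while $\sum_x r(x)\pi(x) = \sum_x b(x) = 1 = \sum_x \pi(x)$ forces $r(x) \ge 1$ for some $x \ge 1$. Hence no maximizer equals $0$.

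For the upper bound, set $m := \lceil\lambda\rceil$. I would prove that $(x+1)b(x+1) \le \lambda b(x)$ for every integer $x \ge m$, which by \eqref{ineq:argmax} excludes $x_o > m$. Fixing $L \in \JJ(x+1)$ and writing $\sigma_L := s(L)$, so $s(L^c) = \lambda - \sigma_L$, the key observation is that
\[
(1-p_k) + (p_k + \lambda - \sigma_L) = 1 + \lambda - \sigma_L
\]
does not depend on $k \in L$. Hence $(1-p_k)/(p_k+\lambda-\sigma_L) = (1+\lambda-\sigma_L)/(p_k+\lambda-\sigma_L) - 1$, and the inner sum in \eqref{eq:B.ratio.2b} becomes $(1+\lambda-\sigma_L)\sum_{k\in L}(p_k+\lambda-\sigma_L)^{-1} - (x+1)$. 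The denominators $p_k+\lambda-\sigma_L$ ($k \in L$) sum to $(x+1)\lambda - x\sigma_L$, so the AM--HM inequality gives $\sum_{k\in L}(p_k+\lambda-\sigma_L)^{-1} \ge (x+1)^2/((x+1)\lambda - x\sigma_L)$, and a short simplification yields
\[
\frac{1}{x+1}\sum_{k\in L}\frac{1-p_k}{p_k+s(L^c)} \ \ge \ \frac{(x+1)-\sigma_L}{(x+1)\lambda - x\sigma_L}.
\]
Cross-multiplying, the right-hand side is $\ge 1/\lambda$ precisely when $(x-\lambda)\sigma_L \ge 0$, which holds for $x \ge m \ge \lambda$ because $\sigma_L \ge 0$. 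Averaging over $L$ with weights $\bar W(L)$ via \eqref{eq:B.ratio.2b} then gives $b(x)/((x+1)b(x+1)) \ge 1/\lambda$ for every $x \ge m$.

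The main obstacle I anticipate is finding the right reformulation to apply AM--HM effectively: bounding the summands in \eqref{eq:B.ratio.2b} term by term is too crude, and Jensen-type inequalities on $p \mapsto (1-p)/(p+\text{const})$ yield inequalities in the wrong direction. The identity $(1-p_k)+(p_k+\lambda-\sigma_L) = 1+\lambda-\sigma_L$ is the mechanism that reduces everything to a harmonic-versus-arithmetic-mean comparison on the denominators alone, and the threshold $\lceil\lambda\rceil$ emerges exactly from the sign condition $(x-\lambda)\sigma_L \ge 0$.
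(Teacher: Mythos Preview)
Your proof is correct and coincides with the paper's argument. Since $(1-p_k)/(p_k+s(L^c)) = (1+s(L^c))/(p_k+s(L^c)) - 1$, your AM--HM step is precisely Jensen's inequality for the convex map $y \mapsto (1-y)/(y+s(L^c))$, and your lower bound $\dfrac{(x+1)-\sigma_L}{(x+1)\lambda - x\sigma_L}$ is literally the paper's $\dfrac{1-\bar s(L)}{\lambda - x\bar s(L)}$ after substituting $\bar s(L)=\sigma_L/(x+1)$; so your worry that ``Jensen-type inequalities on $p\mapsto(1-p)/(p+\text{const})$ yield inequalities in the wrong direction'' is unfounded --- that very Jensen step \emph{is} the paper's proof.
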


\begin{proof}[\bf Proof of Proposition~\ref{prop:argmax}]
The inequality $x_o \ge 1$ follows from $r(0) < 1$, see Remark~\ref{rem:TV}. To verify the inequality $x_o \le \lceil\lambda\rceil$, it suffices to show that $r(x+1)/r(x) < 1$ for any integer $x \ge \lambda$ with $b(x) > 0$. This is equivalent to
\begin{equation}
\label{ineq:B.ratio.2b}
	\frac{b(x)}{(x+1)b(x+1)} \ > \ \lambda^{-1} .
\end{equation}
If $b(x+1) = 0$, this inequality is trivial. Otherwise, the left hand side of \eqref{ineq:B.ratio.2b} is given by \eqref{eq:B.ratio.2b}. Since $(1 - y)/(y + s(L^c))$ is a strictly convex function of $y \ge 0$, Jensen's inequality implies that
\[
	\frac{1}{x+1} \sum_{k \in L} \frac{1 - p_k}{p_k + s(L^c)}
	\ > \ \frac{1 - \bar{s}(L)}{\bar{s}(L) + s(L^c)}
	\ = \ \frac{1 - \bar{s}(L)}{\bar{s}(L) + \lambda - s(L)}
	\ = \ \frac{1 - \bar{s}(L)}{\lambda - x \bar{s}(L)} .
\]
But in case of $x \ge \lambda$,
\[
	\frac{1 - \bar{s}(L)}{\lambda - x \bar{s}(L)}
	\ \ge \ \frac{1 - \bar{s}(L)}{\lambda - \lambda \bar{s}(L)} \ = \ \lambda^{-1} ,
\]
whence \eqref{ineq:B.ratio.2b} holds true.
\end{proof}

Finally, let us mention that the probability mass function $b$ is ultra-log-concave in the sense that $\log r = \log(b/\pi)$ is concave, i.e.\ $r(x+1)/r(x)$ is monotone decreasing in $x \in \{y \ge 0 : b(y) > 0\}$, see Section~4 of \cite{Saumard_Wellner_2014} and the references therein. Equivalently, $(x+1) b(x+1)/b(x)$ is monotone decreasing in $x \in \{y \ge 0 : b(y) > 0\}$. With a direct argument one can even show a stronger result.

\begin{Proposition}
\label{prop:ultra.log.concave}
The ratio $(x+1) b(x+1)/b(x)$ is strictly decreasing in $x \in \{y \ge 0: b(y) > 0\}$.
\end{Proposition}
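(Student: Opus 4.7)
The plan is to reformulate strict decrease as the strict inequality
\[
(x+1)\,b(x+1)^{2} \;>\; (x+2)\,b(x)\,b(x+2)
\]
for every $x\ge 0$ with $b(x+1)>0$. If $b(x+2)=0$ the inequality is immediate; otherwise, writing $b(y)=b(0)\,A_{y}$ with $A_{y}:=\sum_{J\in\JJ(y)}W(J)$, it reduces to $(x+1)\,A_{x+1}^{2} > (x+2)\,A_{x}\,A_{x+2}$, a discrete weighted analogue of Newton's inequality for elementary symmetric polynomials.

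The strategy is to expand both sides as polynomials in the $q_{i}$ and collect summands by their ``union multiset''. For an ordered pair $(J_{1},J_{2})\in\JJ(x+1)^{2}$ the product $W(J_{1})W(J_{2})$ equals $W(M):=\prod_{i}q_{i}^{m_{i}}$ with $m_{i}:=\mathbf{1}\{i\in J_{1}\}+\mathbf{1}\{i\in J_{2}\}\in\{0,1,2\}$, and the same $W(M)$ arises from pairs $(K,L)\in\JJ(x)\times\JJ(x+2)$ with the same union multiset. For $M$ with $a$ doubled indices and $b=2(x+1)-2a=:2c$ singletons, the singletons must split evenly between $J_{1}$ and $J_{2}$ but with imbalance $1$ between $K$ and $L$; a direct count yields
\[
A_{x+1}^{2} \;=\; \sum_{M}\binom{2c}{c}\,W(M),
\qquad
A_{x}\,A_{x+2} \;=\; \sum_{M}\binom{2c}{c-1}\,W(M)
\]
with the convention $\binom{0}{-1}:=0$.

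The key identity then comes from $\binom{2c}{c}=\tfrac{c+1}{c}\binom{2c}{c-1}$ together with $(x+1)(c+1)-c(x+2)=a$, giving
\[
(x+1)\binom{2c}{c}-(x+2)\binom{2c}{c-1}\;=\;\frac{a}{c}\binom{2c}{c-1}\qquad(c\ge 1),
\]
with value $x+1$ at $c=0$. The coefficient is $\ge 0$ for every $M$, and $>0$ as soon as $a\ge 1$. Since $b(x+1)>0$ forces the existence of at least $x+1$ indices with $p_{i}>0$, one may take $M$ to consist of $x+1$ such indices each with multiplicity~$2$: then $a=x+1$, $c=0$, $W(M)>0$, and this single multiset already contributes $(x+1)\,W(M)>0$ to the difference, which makes the overall inequality strict.

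The main obstacle is the combinatorial bookkeeping: realising that the products depend only on the union multiset and enumerating the realising pairs correctly. Once this is carried out, the coefficient identity is a one-line check and the strict inequality comes out directly, sidestepping the limiting argument that would otherwise be needed to upgrade a Newton-type inequality for finite truncations of $\bs{p}$ to strictness in the infinite case.
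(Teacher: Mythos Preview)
Your argument is correct. Both proofs ultimately establish the same bilinear inequality and both group the double sum according to the pair (intersection, symmetric difference) of the two index sets, so the combinatorial backbone is shared. The organisation, however, is genuinely different. The paper starts from the score representation \eqref{eq:B.ratio.1}, rewrites strict decrease as $\sum_{J\in\JJ(x),\,L\in\JJ(x+1)} W(J)W(L)\bigl(S(L)-S(J)\bigr)>0$, and for each fixed $(M,K)=(J\cap L,\,J\triangle L)$ sums over the choices of $L'=L\setminus M$ to obtain a factor $S(K)\binom{2x-2s}{x-s}/(x+1-s)$ that is visibly nonnegative; positivity then comes from choosing $M\in\JJ(x)$ and a singleton $K$. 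You instead bypass the $S$-weights altogether, going straight to the Newton-type inequality $(x+1)A_{x+1}^{2}>(x+2)A_{x}A_{x+2}$ for the elementary symmetric polynomials $A_{y}=e_{y}(q_{1},q_{2},\ldots)$ and comparing the pure counts $\binom{2c}{c}$ versus $\binom{2c}{c-1}$ attached to each union multiset. Your route makes the link to the classical Newton inequalities explicit and is slightly more self-contained (no appeal to \eqref{eq:B.ratio.1} is needed), while the paper's version stays closer to the probabilistic identities developed in Section~\ref{sec:Preparations} and reuses them.
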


\begin{proof}[\bf Proof of Proposition~\ref{prop:ultra.log.concave}]
We have to show that for any integer $x \ge 0$ with $b(x+1) > 0$,
\[
	\frac{(x+2)b(x+2)}{b(x+1)} \ < \ \frac{(x+1) b(x+1)}{b(x)} .
\]
It follows from \eqref{eq:B.ratio.1} that the left hand side equals $S(\mathbb{N}) - \sum_{L \in \JJ(x+1)} \bar{W}(L) S(L)$ while the right hand side equals $S(\mathbb{N}) - \sum_{J \in \JJ(x)} \bar{W}(J) S(J)$. Thus the assertion is equivalent to
\begin{equation}
\label{ineq:ultra.log.concave}
	\sum_{J \in \JJ(x), L \in \JJ(x+1)} W(J) W(L) \bigl( S(L) - S(J) \bigr)
	\ > \ 0 .
\end{equation}
But each pair $(J,L) \in \JJ(x) \times \JJ(x+1)$ is uniquely determined by the three sets $M := J\cap L$, $K := (J\setminus M) \cup (L\setminus M)$ and $L' := L \setminus M$, and
\[
	W(J) W(L) \ = \ W(M)^2 W(K)
	\quad\text{and}\quad
	S(L) - S(J) \ = \ 2 S(L') - S(K) .
\]
Moreover, $\# K = 2x+1 - 2\#M$ and $\#L' = x+1 - \#M$. Hence, the left hand side of \eqref{ineq:ultra.log.concave} equals
\begin{equation}
\label{eq:ultra.log.concave.3}
	\sum_{s=0}^x \sum_{M \in \JJ(s)} \sum_{K \in \JJ(2x+1-2s)}
		1_{[M\cap K = \emptyset]}
				W(M)^2 W(K) H(K)
\end{equation}
with
\begin{align*}
	H(K) \
	:= \ &\sum_{L' \subset K \,:\, \#L' = x+1-s}
			\bigl( 2 S(L') - S(K) \bigr) \\
	 = \ &\sum_{i \in K} q_i \sum_{L' \subset K \,:\, \#L' = x+1-s}
	 		(2 \cdot 1_{L'}(i) - 1) \\
	 = \ &S(K) \binom{2x-2s}{x-s} \big/ (x+1-s) .
\end{align*}
Hence, all summands in \eqref{eq:ultra.log.concave.3} are non-negative, and $W(M)^2 W(K) S(K) > 0$ for suitable sets $M \in \JJ(x)$ and $K \in \JJ(1)$ with $M \cap K = \emptyset$.
\end{proof}

\subsection{Log-density ratios along a ray}

In what follows we consider the sequence $t\bs{p}$ for arbitrary $t \in (0,1]$, leading to the distributions $\Qtp$ with probability mass functions $\btp$, weights $\Wtp(J)$ and sums $S_{t\bs{p}}(J)$. The corresponding Poisson probability mass functions are $\pi_{t\lambda}$, and this leads to the ratios $\rtp$. According to Proposition~\ref{prop:argmax},
\[
	f(t) \ := \ \log \rho(\Qtp, \Poiss_{t\lambda})
	\ = \ \max_{1 \le x \le \lceil t\lambda\rceil} \, \log \rtp(x)
	\ = \ \max_{1 \le x \le \lceil \lambda\rceil} \, \log \rtp(x) .
\]

Now we analyze the functions $L_x : (0,1] \to \R$,
\begin{align*}
	L_x(t) \
	:=& \ \log r_{t\bs{p}}(x) \\
	 =& \ t\lambda + \log \Bigl( (t\lambda)^{-x} x! \sum_{J\in\JJ(x)}
	 	\prod_{i\in J} \frac{tp_i}{1 - tp_i} \prod_{k\ge 1} (1 - tp_k) \Bigr) \\
	 =& \ t\lambda + \sum_{k\ge 1} \log(1 - tp_k)
	 	+ \log \Bigl( \lambda^{-x} x! \sum_{J\in\JJ(x)}
			\prod_{i\in J} \frac{p_i}{1 - tp_i} \Bigr) ,
\end{align*}
for integers $x \ge 0$ with $b(x) > 0$. Note first that $L_x(t)$ can be extended to a real-analytic function of $t \in (-\infty,1/p_*) \supset [0,1]$, and
\begin{align*}
	L_x(0) \
	&= \ \log \Bigl( \lambda^{-x} x! \sum_{J \in \JJ(x)}
		\prod_{i\in J} p_i \Bigr) \\
	&\le \ \log \Bigl( \lambda^{-x} \sum_{i(1),\ldots,i(x) \ge 1}
		\prod_{s=1}^x p_{i(s)} \Bigr)
		\ = \ \log(\lambda^{-x} \lambda^x) \ = \ 0
\end{align*}
with equality for $x = 0,1$ and strict inequality for $x > 1$. This shows already that $f$ is a Lipschitz-continuous function on $(0,1]$ with limit $f(0\,+) = 0$.

Concerning the first derivative of $L_x$, for $t \in (0,1]$,
\[
	\frac{d}{dt} \prod_{i\in J} \frac{p_i}{1 - tp_i}
	\ = \ \sum_{k\in J} \frac{p_k^2}{(1 - tp_k)^2}
		\prod_{i\in J\setminus\{k\}} \frac{p_i}{1 - tp_i}
	\ = \ \prod_{i\in J} \frac{p_i}{1 - tp_i} \sum_{k\in J} \frac{p_k}{1 - tp_k} ,
\]
whence
\begin{align*}
	L_x'(t) \
	&= \ \lambda - \sum_{k\ge 0} \frac{p_k}{1 - tp_k}
		+ \sum_{J \in \JJ(x)} \, \prod_{i\in J} \frac{p_i}{1 - tp_i}
			\sum_{k \in J} \frac{p_k}{1 - tp_k}
			\Big/ \sum_{J \in \JJ(x)} \, \prod_{i\in J} \frac{p_i}{1 - tp_i} \\
	&= \ \lambda - \frac{1}{t} \Bigl( S_{t\bs{p}}(\mathbb{N})
		- \sum_{J\in \JJ(x)} \bar{W}_{t\bs{p}}(J) S_{t\bs{p}}(J) \Bigr) \\
	&= \ \lambda - \frac{1}{t} \sum_{J\in \JJ(x)} \bar{W}_{t\bs{p}}(J) S_{t\bs{p}}(J^c) .
\end{align*}
Combining this formula with \eqref{eq:B.ratio.1} yields
\begin{align}
\label{eq:DLx}
	L_x'(t) \
	&= \ \lambda - \frac{1}{t} \frac{(x+1) \btp(x+1)}{\btp(x)} \\
\nonumber
	&= \ \lambda - \lambda \frac{\rtp(x+1)}{\rtp(x)} \\
\nonumber
	&= \ \lambda \bigl( 1 - \exp \bigl( L_{x+1}(t) - L_x(t) \bigr) \bigr) .
\end{align}
In particular,
\begin{equation}
\label{ineq:LxLx+1}
	L_x'(t) \ \left\{\!\!\begin{array}{c}
		> \\[-0.5ex] = \\[-0.5ex] <
	\end{array}\!\!\right\} \ 0
	\quad\text{if and only if}\quad
	L_x(t) \ \left\{\!\!\begin{array}{c}
		> \\[-0.5ex] = \\[-0.5ex] <
	\end{array}\!\!\right\} \ L_{x+1}(t) .
\end{equation}

There is also an explicit expression for the second derivative of $L_x$: If $b(x+1) = 0$, then $x = n = \#\{i \ge 1 : p_i > 0\}$ and $L_x(t) = \lambda t + \log(\lambda^{-n} n! \, b(n))$, whence $L_x'' \equiv 0$. Otherwise, for $0 < t \le 1$,
\[
	L_x''(t) \ = \ \lambda \exp \bigl( L_{x+1}(t) - L_x(t) \bigr)
		\bigl( L_x'(t) - L_{x+1}'(t) \bigr) ,
\]
and
\[
	L_x'(t) - L_{x+1}'(t)
	\ = \ \frac{1}{t} \Bigl( \frac{(x+2)\btp(x+2)}{\btp(x+1)}
		- \frac{(x+1)\btp(x+1)}{\btp(x)} \Bigr)
	\ < \ 0
\]
by Proposition~\ref{prop:ultra.log.concave}. Hence $L_x$ defines a smooth concave function on $[0,1]$.

\section{Proofs of the main results}
\label{sec:Proofs}

\begin{proof}[\bf Proof of Theorem~\ref{thm:BinPoiss.pmax}]
We know that $f(t) = \log \rho(\Qtp,\Poiss_{t\lambda})$ is equal to the maximum of $L_x(t)$ over $x \in \{1,\ldots,\lceil\lambda\rceil\}$, and that $f(0\,+) = 0$. Note also that
\[
	f'(t\,+) \ = \ \max_{x \in N(t)} L_x'(t)
\]
where
\[
	N(t) \ := \ \argmax_{x \in \{1,\ldots,\lceil\lambda\rceil\}} \, \rtp(x) .
\]
Since $g(t) := - \log(1 - tp_*)$ satisfies $g(0) = 0$ and $g'(t) = p_*/(1 - tp_*)$, it suffices to show that
\[
	L_x'(t) \ \le \ \frac{p_*}{1 - tp_*}
	\quad\text{for any} \ x \in N(t) .
\]
According to \eqref{eq:DLx}, the latter requirement is equivalent to
\[
	\frac{(x+1)\btp(x+1)}{\btp(x)} \ \ge \ t\lambda - \frac{tp_*}{1 - tp_*}
	\quad\text{for any} \ x \in N(t) .
\]
Note that $x \in N(t)$ implies that $L_{x-1}(t) \le L_x(t)$. But the latter inequality is equivalent to $L_{x-1}'(t) \le 0$, see \eqref{ineq:LxLx+1}, and by \eqref{eq:DLx}, this is equivalent to
\[
	\frac{x \btp(x)}{\btp(x-1)} \ \ge \ t\lambda .
\]
Consequently, it suffices to show that
\[
	\frac{(x+1)\btp(x+1)}{\btp(x)} \ \ge \ t\lambda - \frac{tp_*}{1 - tp_*}
	\quad\text{whenever}\quad
	\frac{x \btp(x)}{\btp(x-1)} \ \ge \ t\lambda .
\]
We may simplify notation by replacing $t\bs{p}$ with $\bs{p}$ and prove that
\begin{equation}
\label{ineq:fancy}
	\frac{(x+1)b(x+1)}{b(x)} \ \ge \ \lambda - \frac{p_*}{1 - p_*}
	\quad\text{whenever}\quad
	\frac{x b(x)}{b(x-1)} \ \ge \ \lambda .
\end{equation}
Note that for $1 \le x \le \lceil\lambda\rceil$, the representation \eqref{eq:B.ratio.2a} with $x-1$ in place of $x$ reads
\[
	\frac{b(x-1)}{x b(x)}
	\ = \ \sum_{J \in \JJ(x)} \bar{W}(J)
		\frac{1}{x} \sum_{i \in J} \frac{1}{q_i + S(J^c)} .
\]
By Jensen's inequality,
\[
	\frac{1}{x} \sum_{i \in J} \frac{1}{q_i + S(J^c)}
	\ \ge \ \Bigl( \frac{1}{x} \sum_{i \in J} (q_i + S(J^c)) \Bigr)^{-1}
	\ = \ \bigl( \bar{S}(J) + S(J^c) \bigr)^{-1} ,
\]
so
\[
	\frac{b(x-1)}{x b(x)}
	\ \ge \ \sum_{J \in \JJ(x)} \bar{W}(J) \bigl( \bar{S}(J) + S(J^c) \bigr)^{-1} .
\]
A second application of Jensen's inequality yields that
\[
	\frac{b(x-1)}{x b(x)}
	\ \ge \ \Bigl( \sum_{J \in \JJ(x)}
		\bar{W}(J) \bigl( \bar{S}(J) + S(J^c) \bigr) \Bigr)^{-1} .
\]
Consequently, if $x b(x) / b(x-1) \ge \lambda$, then
\[
	\sum_{J \in \JJ(x)} \bar{W}(J) \bigl( \bar{S}(J) + S(J^c) \bigr)
	\ \ge \ \lambda .
\]
On the other hand, \eqref{eq:B.ratio.1} yields
\begin{align*}
	\frac{(x+1)b(x+1)}{b(x)} \
	&= \ \sum_{J \in \JJ(x)} \bar{W}(J) \bigl( \bar{S}(J) + S(J^c) \bigr)
		- \sum_{J \in \JJ(x)} \bar{W}(J) \bar{S}(J) \\
	&\ge \ \lambda - \frac{p_*}{1 - p_*} ,
\end{align*}
because $\bar{S}(J) = x^{-1} \sum_{i \in J} p_i/(1 - p_i) \le p_*/(1 - p_*)$ for any set $J \in \JJ(x)$. This proves \eqref{ineq:fancy}.
\end{proof}

\begin{proof}[\bf Proof of Theorem~\ref{thm:BinPoiss.Delta}]
We know from Proposition~\ref{prop:argmax} that in case of $\lambda \le 1$,
\[
	\log \rho(Q, \Poiss_\lambda)
		\ = \ \log r(1) \ = \ L_1(1)
\]
with
\[
	L_1(t) \ = \ t\lambda + \sum_{i\ge 1} \log(1 - tp_i)
		+ \log \Bigl( \lambda^{-1} \sum_{i\ge 1} \frac{p_i}{1 - t p_i} \Bigr) .
\]
First of all, $L_1(0) = 0$, and
\begin{align*}
	L_1'(t) \
	&= \ \lambda - \sum_{i\ge 1} \frac{p_i}{1 - tp_i}
		+ \sum_{i\ge 1} \frac{p_i^2}{(1 - tp_i)^2}
			\Big/ \sum_{i\ge 1} \frac{p_i}{1 - tp_i} \\
	&= \ - t \sum_{i \ge 1} \frac{p_i^2}{1 - tp_i}
		+ \sum_{i\ge 1} \frac{p_i^2}{(1 - tp_i)^2}
			\Big/ \sum_{i\ge 1} \frac{p_i}{1 - tp_i} ,
\end{align*}
whence $L_1'(0) = \Delta$. Moreover, we have seen before that $L_1'' \le 0$ by ultra-log-concavity of the probability mass functions $\btp$. Consequently, for some $\xi \in (0,1)$,
\[
	L_1(1)
	\ = \ L_1(0) + L_1'(0) + 2^{-1} L_1''(\xi)
	\ = \ 0 + \Delta + 2^{-1} L_1''(\xi)
	\ \le \ \Delta .
\]

As to the lower bound, recall that
\[
	L_1(1)
	\ = \ \sum_{i\ge 1} (p_i + \log(1 - p_i))
		+ \log \Bigl( \lambda^{-1} \sum_{i\ge 1} \frac{p_i}{1 - p_i} \Bigr) .
\]
On the one hand,
\[
	p_i + \log(1 - p_i)
	\ = \ - \sum_{k\ge 2} \frac{p_i^k}{k}
	\ \ge \ - \frac{p_i^2}{2} \sum_{\ell \ge 0} p_*^\ell
	\ = \ - \frac{p_i^2}{2(1 - p_*)} ,
\]
so
\[
	\sum_{i\ge 1} (p_i + \log(1 - p_i))
	\ \ge \ - \frac{1}{2(1 - p_*)} \sum_{i\ge 1} p_i^2
	\ = \ - \frac{\lambda}{2(1 - p_*)} \Delta .
\]
Moreover,
\[
	\log \Bigl( \lambda^{-1} \sum_{i\ge 1} \frac{p_i}{1 - p_i} \Bigr)
	\ \ge \ \log \Bigl( \lambda^{-1} \sum_{i\ge 1} (p_i + p_i^2) \Bigr)
	\ = \ \log(1 + \Delta)
	\ \ge \ \Delta - \Delta^2/2 , 
\]
and this implies the asserted lower bound for $L_1(1)$.
\end{proof}

\paragraph{Acknowledgement.}
Part of this research was conducted at the Mathematical Research Institute Oberwolfach (MFO), Germany, in June and July 2019. We are grateful to the MFO for its generous hospitality and support. We also thank a referee and an associate editor for constructive comments.


\begin{thebibliography}{4}
\expandafter\ifx\csname natexlab\endcsname\relax\def\natexlab#1{#1}\fi
\expandafter\ifx\csname url\endcsname\relax
  \def\url#1{\texttt{#1}}\fi
\expandafter\ifx\csname urlprefix\endcsname\relax\def\urlprefix{URL }\fi

\bibitem[{Barbour and Hall(1984)}]{Barbour_Hall_1984}
\textsc{Barbour, A.~D.} and \textsc{Hall, P.} (1984).
\newblock On the rate of {P}oisson convergence.
\newblock \textit{Math. Proc. Cambridge Philos. Soc.} \textbf{95} 473--480.

\bibitem[{Chen(1975)}]{Chen_1975}
\textsc{Chen, L. H.~Y.} (1975).
\newblock Poisson approximation for dependent trials.
\newblock \textit{Ann. Probability} \textbf{3} 534--545.

\bibitem[{D{\"u}mbgen et~al.(2020)D{\"u}mbgen, Samworth and
  Wellner}]{Duembgen_Samworth_Wellner_2020}
\textsc{D{\"u}mbgen, L.}, \textsc{Samworth, R.~J.} and \textsc{Wellner, J.~A.}
  (2020).
\newblock Bounding distributional errors via density ratios.
\newblock Preprint (arXiv:1905.03009).

\bibitem[{Saumard and Wellner(2014)}]{Saumard_Wellner_2014}
\textsc{Saumard, A.} and \textsc{Wellner, J.~A.} (2014).
\newblock Log-concavity and strong log-concavity: a review.
\newblock \textit{Stat. Surv.} \textbf{8} 45--114.

\end{thebibliography}

\end{document}